\title{ Generalized M\"obius type functions and special set of $k$-free numbers
}
\begin{document}
\maketitle


\oneauthor{
Antal Bege
%
}{
Sapientia--Hungarian University of Transilvania\\
Department of Mathematics and Informatics,\\
T\^argu Mure\c{s}, Romania
%
}{abege@ms.sapientia.ro
%
}





\short{A. Bege}{ Generalized M\"obius type functions}

\begin{abstract}
In \cite{bege1} Bege introduced the generalized Apostol's M\"obius
functions $\mu_{k,m}(n)$. In this paper we present new
properties of these functions. By  introducing the special set of
$k$-free numbers  we have obtained some asymptotic formulas for
the partial sums of these functions.

\end{abstract}

\section{Introduction}

M\"obius function of order $k$, introduced by T. M. Apostol
\cite{apostol1}, is defined by the following formula:
$$
\mu_k(n)=
\left\{
\begin{array}{cl}
1& \mbox{ if }n=1,\\
0& \mbox{ if }p^{k+1}\mid n\mbox{ for some prime } p,\\
(-1)^r& \mbox{ if } n=p_1^k\cdots p_r^k\prod\limits_{i>r}p_i^{\alpha_i},
\quad \mbox{ with } 0\leq \alpha_i<k,\\
1& \mbox{ otherwise. }
\end{array}
\right.
$$
The generalized function is denoted by $\mu_{k,m}(n)$, where
$1<k\leq m$.
\\
If $m=k$, $\mu_{k,k}(n)$ is defined to be $\mu_k(n)$, and if $m>k$ the
function is defined as follows:
\begin{equation}
\label{21}
\mu_{k,m}(n)=
\left\{
\begin{array}{cl}
1& \mbox{ if } n=1,\\
1& \mbox{ if } p^{k}\nmid n\mbox{ for each prime } p,\\
(-1)^r& \mbox{ if } n=p_1^m\cdots p_r^m\prod\limits_{i>r}p_i^{\alpha_i},
\quad \mbox{ with } 0\leq \alpha_i<k,\\
0& \mbox{ otherwise. }
\end{array}
\right.
\end{equation}

In this paper we show some relations that hold among the functions
$\mu_{k,m}(n)$. We introduce the new type of $k$-free
integers and we make a connection between generalized
M\"obius function and the characteristic function $q^*_{k,m}(n)$ of
these. We use these to derive an asymptotic formula for the
summatory function of $q^*_{k,m}(n)$.

\section{Basic lemmas}

The generalization $\mu_{k,m}$, like Apostol's $\mu_k(n)$, is a multiplicative function
of $n$, so it is determined by its values at the prime powers. We have
$$
\mu_k(p^{\alpha })=\left\{
\begin{array}{rcl}
1& \mbox{ if }& 0\leq \alpha <k,\\
-1& \mbox{ if }& \alpha =k,\\
0& \mbox{ if }& \alpha >k,
\end{array}
\right.
$$
whereas
\begin{equation}
\label{22}
\mu_{k,m}(p^{\alpha })=\left\{
\begin{array}{rcl}
1& \mbox{ if }& 0\leq \alpha <k,\\
0& \mbox{ if }& k\leq \alpha <m,\\
-1& \mbox{ if }& \alpha =m,\\
0& \mbox{ if }& \alpha >m,
\end{array}
\right.
\end{equation}

\noindent
In \cite{apostol1} Apostol obtained the asymptotic formula
\begin{equation}
\label{11}
\sum_{n\leq x}\mu_k(n)=A_kx+O(x^{\frac{1}{k}}\log x),
\end{equation}
where
$$
A_k=\prod_p\left(1-\frac{2}{p^k}+\frac{1}{p^{k+1}}\right).
$$
Later, Suryanarayana \cite{suryanarayana1} showed that, on the assumption of
the Riemann hypothesis, the error term in (\ref{11}) can be improved to
\begin{equation}
\label{12}
O\left(x^{\frac{4k}{4k^2+1}}\omega (x)\right),
\end{equation}
Where
$$
\omega (x)=\mbox{exp}\{A\log x (\log \log x)^{-1}\}
$$
for some positive constant $k$.
\\
In 2001 A. Bege \cite{bege1} proved the following asymptotic formulas.
\\
\\
\begin{lemma}[ \cite{bege1}, Theorem 3.1.]
For $x\geq 3$ and $m>k\geq 2$ we have
\begin{equation}
\label{31}
\sum_{
\begin{array}{c}
r\leq x\\
(r,n)=1
\end{array}}
\mu_{k,m}(r)=
\frac{xn^2\; \alpha_{k,m}}{\zeta(k)\psi_k(n)\alpha_{k,m}(n)}+
0\left(\theta(n)x^\frac{1}{k}\delta(x)\right).
\end{equation}
uniformly in $x$, $n$ and $k$, where
$\theta(n)$ the number of square-free divisors of $n$,
$$
\alpha_{k,m}=\prod_p\left(1-\frac{1}{p^{m-k+1}+p^{m-k+2}+\cdots +p^m}\right),
$$
$$
\alpha_{k,m}(n)=n\prod_{p|n}\left(1-
\frac{1}{p^{m-k+1}+p^{m-k+2}+\cdots +p^m}\right),
$$
$$
\psi_k(n)=n\prod_{p|n}\left(1+\frac{1}{p}+\cdots +\frac{1}{p^{k-1}}\right),
$$
and
$$
\delta _k(x)=\mbox{ exp }\{-A\; k^{-\frac{8}{5}}\log^{\frac{3}{5}}x\; 
(\log \log x)^{-\frac{1}{5}}\},\quad A>0.
$$
\end{lemma}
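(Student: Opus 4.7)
The plan is to reduce the sum to a weighted count of $k$-free integers coprime to an auxiliary modulus, and then invoke a Suryanarayana-type asymptotic for such counts. From (\ref{22}) and the multiplicativity of $\mu_{k,m}$, the function $\mu_{k,m}(r)$ vanishes unless each prime power $p^{\alpha}\|r$ satisfies $\alpha\in[0,k-1]\cup\{m\}$; equivalently, unless $r=uv^{m}$ with $u$ $k$-free, $v$ squarefree, and $(u,v)=1$, in which case $\mu_{k,m}(r)=\mu(v)$. Grouping the sum in (\ref{31}) by $v$ rewrites it as
\[
\sum_{
\begin{array}{c}
v\leq x^{1/m},\ (v,n)=1\\
v\mbox{ squarefree}
\end{array}}\mu(v)\,Q_{k}^{*}(x/v^{m};\,vn),
\]
where $Q_{k}^{*}(y;N)$ denotes the number of $k$-free integers $\leq y$ coprime to $N$.

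The technical heart of the proof is the estimate
\[
Q_{k}^{*}(y;N)=\frac{yN}{\zeta(k)\psi_{k}(N)}+O\bigl(\theta(N)\,y^{1/k}\,\delta(y)\bigr),
\]
obtained by inversion $q_{k}(u)=\sum_{d^{k}\mid u}\mu(d)$, exchange of summation, and application of the Vinogradov--Korobov bound on partial sums of $\mu(d)$ with $(d,N)=1$, following Suryanarayana \cite{suryanarayana1}. This is the step on which the whole proof rests, and the one I expect to demand the most care.

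Substituting into the $v$-sum and using that $(v,n)=1$ implies $\psi_{k}(vn)=\psi_{k}(v)\psi_{k}(n)$, a direct computation for squarefree $v=\prod p_{i}$ gives $v^{m-1}\psi_{k}(v)=\prod_{p\mid v}(p^{m-k+1}+p^{m-k+2}+\cdots+p^{m})$. Consequently the main term assembles into the Euler product
\[
\frac{xn}{\zeta(k)\psi_{k}(n)}\prod_{p\nmid n}\left(1-\frac{1}{p^{m-k+1}+\cdots+p^{m}}\right)=\frac{xn^{2}\alpha_{k,m}}{\zeta(k)\psi_{k}(n)\alpha_{k,m}(n)},
\]
which matches the main term in (\ref{31}).

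The remaining obstacle is the error accounting, where three sources must be simultaneously controlled: (a) the tail of the $v$-sum beyond $x^{1/m}$, which is bounded by the convergent series $\sum_{v}v^{-m}$ (since $\psi_{k}(v)\geq v$ and $m\geq 3$) and contributes $O(x^{1/m})\leq O(x^{1/k})$; (b) the cumulative error $\sum_{v\leq x^{1/m}}\theta(vn)(x/v^{m})^{1/k}\delta(x/v^{m})$, which, by $\theta(vn)=\theta(v)\theta(n)$ for $(v,n)=1$, the slow variation of $\delta$ on $[1,x]$, and convergence of $\sum_{v}\theta(v)v^{-m/k}$ (since $m/k>1$), reduces to $O(\theta(n)\,x^{1/k}\,\delta(x))$; and (c) the implicit tail in the derivation of the $Q_{k}^{*}$-asymptotic, already absorbed in the $\delta(y)$ factor. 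Combining, the total error meets the bound in (\ref{31}).
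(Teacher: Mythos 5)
The paper does not actually prove this lemma: it is imported verbatim from \cite{bege1} (Theorem 3.1 there) and used as a black box, so there is no in-paper argument to compare yours against. Judged on its own, your reconstruction is architecturally sound and almost certainly the intended one. The decomposition $r=uv^{m}$ with $u$ $k$-free, $v$ squarefree, $(u,v)=1$ and $\mu_{k,m}(r)=\mu(v)$ follows correctly from (\ref{22}); the reduction to counting $k$-free integers coprime to $vn$ is right; and the main-term assembly checks out, since $\psi_k(vn)=\psi_k(v)\psi_k(n)$ and $v^{m-1}\psi_k(v)=\prod_{p\mid v}(p^{m-k+1}+\cdots+p^{m})$ for squarefree $v$ do produce $\frac{xn^{2}\alpha_{k,m}}{\zeta(k)\psi_k(n)\alpha_{k,m}(n)}$. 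You are also right to identify the uniform estimate for $Q_k^{*}(y;N)$ as the real content; that is exactly what Suryanarayana's papers cited in the bibliography supply, and deferring to them is legitimate here.

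The one step you should not wave through is item (b) of your error accounting. The function $\delta$ is \emph{decreasing}, so $\delta(x/v^{m})\geq\delta(x)$, and ``slow variation of $\delta$ on $[1,x]$'' does not let you replace $\delta(x/v^{m})$ by $\delta(x)$; a naive bound $\delta(x/v^{m})\leq 1$ only yields $O(\theta(n)x^{1/k})$, losing the $\delta(x)$ factor entirely. The standard repair is the one the paper itself uses when it proves Theorem 3 in Section 4: write $(x/v^{m})^{1/k}\delta(x/v^{m})=(x/v^{m})^{1/k-\epsilon}\cdot(x/v^{m})^{\epsilon}\delta(x/v^{m})$, use that $y^{\epsilon}\delta(y)$ is increasing to bound the second factor by $x^{\epsilon}\delta(x)$, and choose $\epsilon$ with $m(1/k-\epsilon)>1$ so that $\sum_{v}\theta(v)v^{-m(1/k-\epsilon)}$ converges. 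With that substitution your proof is complete modulo the cited $Q_k^{*}(y;N)$ asymptotic.
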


\begin{lemma}[\cite{bege1}, Theorem 3.2.]
\label{2.2.}
If the Riemann hypothesis is true, then for $x\geq 3$ and $m>k\geq 2$
we have
\begin{equation}
\label{33}
\sum_{
\begin{array}{c}
r\leq x\\
(r,n)=1
\end{array}}
\mu_{k,m}(r)=
\frac{xn^2\; \alpha_{k,m}}{\zeta(k)\psi_k(n)\alpha_{k,m}(n)}+
0\left(\theta(n)x^{\frac{2}{2k+1}}\omega(x)\right).
\end{equation}
uniformly in $x$, $n$ and $k$.
\end{lemma}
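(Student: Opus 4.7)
The plan is to follow the proof of Lemma~2.1 (\cite{bege1}, Theorem~3.1) line by line, but at every step where the partial sum $M(t)=\sum_{n\le t}\mu(n)$ is estimated, insert the Riemann Hypothesis bound $M(t)=O(t^{1/2}\omega(t))$ in place of the unconditional zero-free region bound, and re-optimize the truncation parameter. The exponent $2/(2k+1)$ in (\ref{33}) is exactly what this re-balancing produces.

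First, I would record the Dirichlet convolution identity underlying the whole argument. The local Euler factor read off from (\ref{22}),
$$\sum_{\alpha\ge 0}\frac{\mu_{k,m}(p^\alpha)}{p^{\alpha s}}=\frac{1-p^{-ks}}{1-p^{-s}}-p^{-ms},$$
corresponds to the arithmetic identity
$$\mu_{k,m}(r)=\sum_{\begin{array}{c}d^{m}e=r\\ e\mbox{ is }k\mbox{-free}\\(d,e)=1\end{array}}\mu(d),$$
so that after substituting and swapping summations,
$$\sum_{\begin{array}{c}r\le x\\(r,n)=1\end{array}}\mu_{k,m}(r)=\sum_{\begin{array}{c}d\le x^{1/m}\\(d,n)=1\end{array}}\mu(d)\,Q_{k}^{*}(x/d^{m};dn),$$
where $Q_{k}^{*}(y;N)$ counts $k$-free integers $\le y$ coprime to $N$.

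Second, I would establish the Walfisz--Suryanarayana asymptotic for $Q_{k}^{*}(y;N)$ under RH. Expanding $[e\mbox{ is }k\mbox{-free}]=\sum_{b^{k}\mid e}\mu(b)$ and splitting the $b$-sum at a parameter $U$, the short range produces the main term $y/\zeta(k)\cdot\prod_{p\mid N}(1-p^{-1})/(1-p^{-k})$ together with a rounding error of order $U\theta(N)$, while the tail $\sum_{b>U}\mu(b)/b^{k}$, estimated by partial summation against the RH bound on $M$, contributes $O(\theta(N)\,yU^{(1-2k)/2}\omega(y))$. Balancing these two at $U\sim y^{2/(2k+1)}$ yields
$$Q_{k}^{*}(y;N)=\frac{y}{\zeta(k)}\prod_{p\mid N}\frac{1-p^{-1}}{1-p^{-k}}+O\bigl(\theta(N)\,y^{2/(2k+1)}\omega(y)\bigr).$$
Substituting back, the main terms reorganize --- by exactly the Euler product manipulation already carried out in Lemma~2.1 --- into $xn^{2}\alpha_{k,m}/(\zeta(k)\psi_{k}(n)\alpha_{k,m}(n))$, and the aggregate error is
$$\ll\theta(n)\,x^{2/(2k+1)}\omega(x)\sum_{d\le x^{1/m}}\frac{\theta(d)}{d^{2m/(2k+1)}}\ll\theta(n)\,x^{2/(2k+1)}\omega(x),$$
since $m>k$ forces $2m/(2k+1)>1$ and the $d$-sum converges.

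The main obstacle is the uniform-in-$N$ version of the RH Möbius bound, namely $\sum_{b\le B,\,(b,N)=1}\mu(b)\ll\theta(N)\,B^{1/2}\omega(B)$ with absolute implied constant. Once this coprime estimate is in place --- by Möbius inversion on the coprimality condition combined with the classical $M(y)=O(y^{1/2}\omega(y))$ --- the cutoff $U\sim y^{2/(2k+1)}$ is forced, and the remainder of the argument is algebraic bookkeeping with Euler products identical to the unconditional case of Lemma~2.1.
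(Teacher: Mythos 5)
The paper offers no proof of this lemma: it is imported verbatim from \cite{bege1} (Theorem 3.2), so there is no internal argument to compare yours against. Your reconstruction --- the identity $\mu_{k,m}(r)=\sum \mu(d)$ over $r=d^{m}e$ with $e$ being $k$-free and $(d,e)=1$, the RH-conditional count of $k$-free integers coprime to $N$ with error $O(\theta(N)\,y^{2/(2k+1)}\omega(y))$ obtained by balancing at $U\sim y^{2/(2k+1)}$, and the convergent $d$-sum since $2m/(2k+1)>1$ --- is precisely the standard Apostol--Suryanarayana argument that the cited source follows, and it is sound.
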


\begin{lemma}[\cite{apostol2}]
\label{2.4.}
If $s>0$, $s\not= 1$, $x\geq 1$, then
\[
\sum_{n\leq x}\frac{1}{n^s}=\zeta (s)-\frac{1}{(s-1)x^{s-1}}+O\left(\frac{1}{x^s}\right).
\]
\end{lemma}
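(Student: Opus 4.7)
The plan is to apply Abel's summation formula to convert the discrete sum into an integral and then identify the main terms via the classical integral representation of $\zeta(s)$.

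Specifically, with $a_n=1$ and $f(t)=t^{-s}$, partial summation yields
\[
\sum_{n\leq x}\frac{1}{n^s}=\frac{\lfloor x\rfloor}{x^s}+s\int_1^x\frac{\lfloor t\rfloor}{t^{s+1}}\,dt.
\]
Writing $\lfloor t\rfloor=t-\{t\}$ and $\lfloor x\rfloor=x-\{x\}$, the dominant integral $s\int_1^x t^{-s}\,dt$ evaluates to $\frac{s}{s-1}(1-x^{1-s})$, and the boundary term $\lfloor x\rfloor/x^s$ contributes $x^{1-s}+O(x^{-s})$. Combining these, the $x^{1-s}$ pieces assemble with coefficient $1-\frac{s}{s-1}=-\frac{1}{s-1}$, which is exactly the claimed main error $-\frac{1}{(s-1)x^{s-1}}$.

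What remains is the constant-plus-fractional-part contribution
\[
\frac{s}{s-1}-s\int_1^x\{t\}\,t^{-s-1}\,dt.
\]
I would extend the last integral to infinity; since $\int_x^\infty t^{-s-1}\,dt=\frac{1}{s x^s}$, the incurred error is $O(x^{-s})$, which is absorbable. The remaining quantity is precisely the standard representation
\[
\zeta(s)=\frac{s}{s-1}-s\int_1^\infty\{t\}\,t^{-s-1}\,dt,
\]
which converges for every $s>0$ with $s\ne 1$ and agrees with the analytic continuation of $\zeta$ in that range.

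The only delicate point is the case $0<s<1$: there the Dirichlet series defining $\zeta(s)$ does not converge, so the identification of the constant term must rely on the analytic continuation provided by the integral representation rather than on a direct tail estimate. For $s>1$ one can avoid this subtlety entirely by writing $\sum_{n\leq x}=\zeta(s)-\sum_{n>x}$ and bounding the tail by integral comparison, which produces the same answer and would serve as a consistency check.
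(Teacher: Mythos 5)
Your proof is correct, and it is essentially the argument in the cited source (Apostol, \emph{Introduction to Analytic Number Theory}, Theorem 3.2), which the paper quotes without proof: Euler--Abel summation, the splitting $\lfloor t\rfloor=t-\{t\}$, extension of the fractional-part integral to infinity with an $O(x^{-s})$ tail, and identification of the constant with $\zeta(s)$ (by letting $x\to\infty$ for $s>1$, and by analytic continuation of the integral representation for $0<s<1$). You correctly flag the only delicate point, namely the interpretation of $\zeta(s)$ in the range $0<s<1$.
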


\section{Generalized $k$-free numbers}

Let $Q_k$ denote the set of $k$-free numbers and let $q_k(n)$ to
be the characteristic function of this set. Cohen \cite{cohen1}
introduced the $Q_k^*$ set, the set of positive integers $n$ with
the property that the multiplicity of each prime divisor of $n$ is
not a multiple of $k$. Let $q_k^*(n)$ be the characteristic
function of these integers.
\[
q_k^*(n)=
\left\{
\begin{array}{ll}
1,& \mbox{ if }n=1\\
1,& \mbox{ if }n=p_1^{\alpha_1}\ldots p_k^{\alpha_k},\; \alpha_i\not\equiv 0\pmod{k}\\
0,& \mbox{ otherwise}.
\end{array}
\right.
\]
We introduce the following special set of integers
\begin{eqnarray*}
Q_{k,m}:&=&\{n\mid n=n_1\cdot n_2,\; (n_1,n_2)=1,\; n_1\in Q_k,\\
& &n_2=1\mbox{ or }n_2=(p_1\ldots p_i)^m,\; p_i\in \mathbb{P}\},
\end{eqnarray*}
with the characteristic function
\[
q_{k,m}(n)=
\left\{
\begin{array}{ll}
1,& \mbox{ if }n\in Q_{k,m}\\
0,& \mbox{ if }n\not\in Q_{k,m}.
\end{array}
\right.
\]
The function $q_{k,m}(n)$ is multiplicative and
\begin{equation}
\label{21_27}
q_{k,m}(n)=|\mu_{k,m}(n)|.
\end{equation}

We introduce the following set $Q_{k,m}^*$ which the generalization of $Q_k^*$. The  integer $n$ is in the set $Q_{k,m}^*,\; 1<k<m$ iff the power of each prime divisor of $n$ divided by $m$ has the remainder between 1 and $k-1$. The characteristic functions of these numbers is
\[
q_{k,m}^*(n)=
\left\{
\begin{array}{ll}
1,& \mbox{ if }n=p_1^{\alpha_1}\ldots p_k^{\alpha_k},\; \exists \ell: \; \ell m< \alpha_i<\ell m+k\\
0,& \mbox{ otherwise}.
\end{array}
\right.
\]
If we write the generating functions for this  functions we
have the following result.

\begin{theorem}
If
$m\geq k$ and the series converges absolutely we have
\begin{eqnarray}
\sum_{n=1}^{\infty }\frac{\mu_{k,m}(n)}{n^s}&=&\zeta(s)\prod_p\left(1-\frac{1}{p^{ks}}-\frac{1}{p^{ms}}+\frac{1}{p^{(m+1)s}}\right),\\
\sum_{n=1}^{\infty }\frac{q^*_{k,m}(n)}{n^s}&=&\zeta(s)\zeta(ms)\prod_p\left(1-\frac{1}{p^{ks}}-\frac{1}{p^{ms}}+\frac{1}{p^{(m+1)s}}\right),
\\
\sum_{n=1}^{\infty }\frac{q_{k,m}(n)}{n^s}&=&\zeta(s)\prod_p\left(1-\frac{1}{p^{ks}}+\frac{1}{p^{ms}}-\frac{1}{p^{(m+1)s}}\right).
\end{eqnarray}
\end{theorem}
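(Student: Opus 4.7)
The plan is to exploit the multiplicativity of all three arithmetic functions and reduce each identity to an evaluation of Euler factors. Multiplicativity of $\mu_{k,m}$ comes from the author's earlier work; $q_{k,m}=|\mu_{k,m}|$ inherits it via (\ref{21_27}); and $q^*_{k,m}$ is multiplicative because the defining congruence condition on $\mathrm{ord}_p(n)\bmod m$ is evaluated on each prime-power component of $n$ separately. Assuming the absolute-convergence hypothesis of the theorem (which holds in a right half-plane since each function is bounded by $1$), I would apply the standard Euler factorization
\[
\sum_{n=1}^{\infty}\frac{f(n)}{n^s}=\prod_p\Bigl(1+\sum_{\alpha\geq 1}\frac{f(p^{\alpha})}{p^{\alpha s}}\Bigr),
\]
compute each local factor in closed form, and match the resulting product with the claimed formula, pulling out $\zeta(s)$ (and, in the second identity, also $\zeta(ms)$) explicitly.

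For the first identity I would read off from (\ref{22}) the finite local sum $\sum_{\alpha=0}^{k-1}p^{-\alpha s}-p^{-ms}=(1-p^{-ks})/(1-p^{-s})-p^{-ms}$ and extract the factor $(1-p^{-s})^{-1}$, whose Euler product over $p$ is $\zeta(s)$. The remaining polynomial is $1-p^{-ks}-(1-p^{-s})p^{-ms}=1-p^{-ks}-p^{-ms}+p^{-(m+1)s}$, which is exactly the Euler factor in the statement. The third identity goes through identically, the only change being that $q_{k,m}(p^m)=1$ rather than $-1$; the same manipulation then produces the reflected factor $1-p^{-ks}+p^{-ms}-p^{-(m+1)s}$.

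The second identity carries the main computation. From the definition of $q^*_{k,m}$ the admissible exponents at each prime form the periodic set $\{0\}\cup\bigcup_{\ell\geq 0}\{\ell m+1,\ldots,\ell m+k-1\}$, so the local factor is
\[
1+\Bigl(\sum_{j=1}^{k-1}p^{-js}\Bigr)\sum_{\ell=0}^{\infty}p^{-\ell m s}=1+\frac{p^{-s}-p^{-ks}}{(1-p^{-s})(1-p^{-ms})}.
\]
I would then place everything over the common denominator $(1-p^{-s})(1-p^{-ms})$ and simplify the numerator $(1-p^{-s})(1-p^{-ms})+p^{-s}-p^{-ks}$ to $1-p^{-ks}-p^{-ms}+p^{-(m+1)s}$. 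Taking the product over $p$ converts the two denominators into $\zeta(s)\zeta(ms)$ and yields the claimed Euler product. The only real obstacle is this final algebraic step; it is routine but must be executed carefully so that the surviving Euler factor matches the one already obtained for $\mu_{k,m}$, thereby exhibiting the three identities as variations on a single local computation.
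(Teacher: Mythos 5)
Your proposal is correct and follows essentially the same route as the paper: both arguments use multiplicativity to reduce each Dirichlet series to an Euler product, compute the local factor at each prime from the values at prime powers, and extract $\zeta(s)$ (and $\zeta(ms)$ in the second identity) by the same algebraic simplification of the numerator $(1-p^{-s})(1-p^{-ms})+p^{-s}-p^{-ks}=1-p^{-ks}-p^{-ms}+p^{-(m+1)s}$.
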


\begin{proof}
The function $\mu_{k,m}(n)$ multiplicative, when the series converges absolutely ($s>1$) we have
\begin{eqnarray*}
\sum_{n=1}^{\infty }\frac{\mu_{k,m}(n)}{n^s}&=&\prod_{p}\left(1+\frac{\mu_{k,m}(p)}{p^s}+\ldots +\frac{\mu_{k,m}(p^{\alpha })}{p^{\alpha s}}+\ldots\right)=\\
&=&\prod_{p}\left(1+\frac{1}{p^s}+\ldots +\frac{1}{p^{(k-1)s}}-\frac{1}{p^{m s}}\right)=\\
&=& \prod_{p}\frac{1}{\displaystyle 1-\frac{1}{p^s}}\prod_p\left(1-\frac{1}{p^{ks}}-\frac{1}{p^{ms}}+\frac{1}{p^{(m+1)s}}\right)=
\\
&=&\zeta(s)\prod_p\left(1-\frac{1}{p^{ks}}-\frac{1}{p^{ms}}+\frac{1}{p^{(m+1)s}}\right).
\end{eqnarray*}
In the similar way because $q^*_{k,m}(n)$ multiplicative we have:
\begin{eqnarray*}
\sum_{n=1}^{\infty }\frac{q^*_{k,m}(n)}{n^s}&=&\prod_{p}\left(1+\frac{q^*_{k,m}(p)}{p^s}+\ldots +\frac{q^*_{k,m}(p^{\alpha })}{p^{\alpha s}}+\ldots\right)=\\
&=&\prod_{p}\left(1+\left(\frac{1}{p^s}+\frac{1}{p^{2s}}+\ldots +\frac{1}{p^{(k-1)s}}\right)+\right.\\
&+&\left.\left(\frac{1}{p^{(m+1)s}}+\frac{1}{p^{(m+2)s}}\ldots +\frac{1}{p^{(m+k-1)s}}\right)+\ldots \right)=\\
&=& \prod_{p}\left(1+\left(\frac{1}{p^s}+\frac{1}{p^{2s}}+\ldots +\frac{1}{p^{(k-1)s}}\right) \left(1+\frac{1}{p^{ms}}+\frac{1}{p^{2ms}}+\ldots \right)\right)
\\
&=&\prod_{p}\left(1+\frac{\displaystyle\frac{1}{p^s}-\frac{1}{p^{ks}}}{\displaystyle 1-\frac{1}{p^s}}\frac{1}{\displaystyle 1-\frac{1}{p^{ms}}}\right)=\\
&=&\zeta(s)\zeta(ms)\prod_p\left(1-\frac{1}{p^{ks}}-\frac{1}{p^{ms}}+\frac{1}{p^{(m+1)s}}\right).
\end{eqnarray*}
Because $q_{k,m}(n)$ multiplicative and $q_{k,m}(n)=|\mu_{k,m}(n)|$ we have:
\begin{eqnarray*}
\sum_{n=1}^{\infty }\frac{q_{k,m}(n)}{n^s}&=&\prod_{p}\left(1+\frac{q_{k,m}(p)}{p^s}+\ldots +\frac{q_{k,m}(p^{\alpha })}{p^{\alpha s}}+\ldots\right)=\\
&=&\prod_{p}\left(1+\frac{1}{p^s}+\ldots +\frac{1}{p^{(k-1)s}}+\frac{1}{p^{m s}}\right)=\\
&=& \prod_{p}\frac{1}{\displaystyle 1-\frac{1}{p^s}}\prod_p\left(1-\frac{1}{p^{ks}}+\frac{1}{p^{ms}}-\frac{1}{p^{(m+1)s}}\right)=
\\
&=&\zeta(s)\prod_p\left(1-\frac{1}{p^{ks}}+\frac{1}{p^{ms}}-\frac{1}{p^{(m+1)s}}\right),
\end{eqnarray*}
\end{proof}

\noindent
In the particular case when $m=k$ we have $\mu_{k,m}(n)=\mu_{k}(n)$, $q_{k,m}(n)=q_{k+1}(n)$ and
\begin{eqnarray*}
\label{21_31}
\sum_{n=1}^{\infty }\frac{\mu_{k}(n)}{n^s}&=&\zeta(s)\prod_p\left(1-\frac{2}{p^{ks}}+\frac{1}{p^{(k+1)s}}\right),\\
\label{21_33}
\sum_{n=1}^{\infty }\frac{q_{k+1}(n)}{n^s}&=&\frac{\zeta(s)}{\zeta\big((k+1)s\big)}.
\end{eqnarray*}

\noindent
We have the following convolution type
formulas.

\begin{theorem}
\label{t3_2}
If $m\geq k$
\begin{eqnarray}
\label{21_28}
q^*_{k,m}(n)&=&\sum_{d^m\delta= n}\mu_{k,m}(\delta ),\\
\label{21_29}
\mu_{k,m}(n)&=&\sum_{d^m\delta= n}\mu(d)q^*_{k,m}(\delta ).
\end{eqnarray}
\end{theorem}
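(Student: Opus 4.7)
The plan is to derive both identities directly from the Dirichlet series computed in the previous theorem, rather than performing a prime-power case analysis. The key observation is that if we write $P(s):=\prod_p\bigl(1-1/p^{ks}-1/p^{ms}+1/p^{(m+1)s}\bigr)$, then the previous theorem gives
\[
\sum_{n=1}^{\infty}\frac{\mu_{k,m}(n)}{n^s}=\zeta(s)P(s),\qquad \sum_{n=1}^{\infty}\frac{q^*_{k,m}(n)}{n^s}=\zeta(s)\zeta(ms)P(s),
\]
so the two Dirichlet series differ by a factor of $\zeta(ms)$. This factor is exactly the generating function of the indicator $g_m$ of the $m$-th powers, since $\zeta(ms)=\sum_{d\geq 1}1/(d^m)^s$.

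First I would establish (\ref{21_28}). Multiplying the two series and comparing coefficients of $n^{-s}$ (valid in the region of absolute convergence) gives
\[
\sum_{n=1}^{\infty}\frac{q^*_{k,m}(n)}{n^s}=\zeta(ms)\sum_{n=1}^{\infty}\frac{\mu_{k,m}(n)}{n^s}=\sum_{n=1}^{\infty}\frac{1}{n^s}\sum_{d^m\delta=n}\mu_{k,m}(\delta),
\]
which yields $q^*_{k,m}(n)=\sum_{d^m\delta=n}\mu_{k,m}(\delta)$ by the uniqueness of Dirichlet series coefficients. Next, for (\ref{21_29}), I would invert the relation using $1/\zeta(ms)=\sum_{d\geq 1}\mu(d)/(d^m)^s$, so
\[
\sum_{n=1}^{\infty}\frac{\mu_{k,m}(n)}{n^s}=\frac{1}{\zeta(ms)}\sum_{n=1}^{\infty}\frac{q^*_{k,m}(n)}{n^s}=\sum_{n=1}^{\infty}\frac{1}{n^s}\sum_{d^m\delta=n}\mu(d)q^*_{k,m}(\delta),
\]
again matching coefficients. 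Equivalently, one may observe that the arithmetic function $h_m$ defined by $h_m(d^m)=\mu(d)$ and $h_m(n)=0$ otherwise satisfies $g_m*h_m=\varepsilon$ (the Dirichlet identity), so convolving both sides of (\ref{21_28}) with $h_m$ yields (\ref{21_29}) immediately.

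There is essentially no obstacle: the only thing to verify is that the coefficient comparison is justified, which follows from absolute convergence of all three series for $s>1$ (under the hypothesis $m\geq k\geq 2$, the local factors in $P(s)$ are bounded and the product converges). If a reader prefers an elementary route, one can alternatively note that both sides of each identity are multiplicative (the RHS is a Dirichlet convolution of multiplicative functions) and verify the equalities on prime powers $p^\alpha$ using the explicit formula (\ref{22}) for $\mu_{k,m}(p^\alpha)$ and the analogous prime-power values of $q^*_{k,m}$; this is a routine case split according to the residue of $\alpha$ modulo $m$, but the generating-function argument is more concise and fits naturally with the previous theorem.
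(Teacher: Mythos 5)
Your proof is correct, but it takes a genuinely different route from the paper. The paper proves (\ref{21_28}) elementarily: both sides are multiplicative (the right-hand side being a Dirichlet convolution of multiplicative functions), so it suffices to check equality at prime powers $p^{\alpha}$, which the paper does by splitting into the cases $\alpha=\ell m+i$ with $0<i<k$, with $k\leq i<m$, and $\alpha=\ell m$; it then obtains (\ref{21_29}) from (\ref{21_28}) ``by M\"obius inversion,'' which is exactly your observation that $h_m * g_m=\varepsilon$. You instead read both identities off the Euler products of the preceding theorem, noting that the two Dirichlet series differ by the factor $\zeta(ms)=\sum_d (d^m)^{-s}$ and invoking uniqueness of Dirichlet coefficients. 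This is legitimate and non-circular, since the preceding theorem's Euler products are computed directly from the definitions of $\mu_{k,m}$ and $q^*_{k,m}$ on prime powers; your argument is more concise and makes the structural content ($q^*_{k,m}=g_m*\mu_{k,m}$) transparent. What the paper's approach buys is independence from any convergence considerations and from the preceding theorem: the identity is purely combinatorial and holds with no analytic input, whereas your route silently relocates the prime-power case analysis into the computation of the Euler product for $q^*_{k,m}$ and requires the (routine but necessary) justification of absolute convergence for $s>1$ that you correctly flag. Either proof is acceptable; yours is shorter given what precedes it, the paper's is more self-contained.
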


\begin{proof}
Because $q_{k,m}(n)$ and $\mu_{k,m}(n)$ are multiplicative, results that both sides of (\ref{21_28}) are multiplicative functions. Hence it is enough if we verify the identity for $n=p^{\alpha }$, a prime power.
\\
If $\alpha =\ell m+i$ and $0<i<k$
\begin{eqnarray*}
\sum_{d^m\delta = p^{\alpha }}\mu_{k,m}(\delta )&=&\mu_{k,m}(p^{\ell m+i})+\mu_{k,m}(p^{(\ell -1) m+i})+\ldots +\mu_{k,m}(p^{m+i})+\\
&+&\mu_{k,m}(p^{i})=1=q_{k,m}(p^{\alpha }).
\end{eqnarray*}
If $\alpha =\ell m+i$ and $k<i<m$
\begin{eqnarray*}
\sum_{d^m\delta = p^{\alpha }}\mu_{k,m}(\delta )&=&\mu_{k,m}(p^{\ell m+i})+\mu_{k,m}(p^{(\ell -1) m+i})+\ldots +\mu_{k,m}(p^{m+i})+\\
&+&\mu_{k,m}(p^{i})=0=q_{k,m}(p^{\alpha }).
\end{eqnarray*}
If $\alpha =\ell m$
\begin{eqnarray*}
\sum_{d^m\delta = p^{\alpha }}\mu_{k,m}(\delta )&=&\mu_{k,m}(p^{\ell m})+\mu_{k,m}(p^{(\ell -1) m})+\ldots +\mu_{k,m}(p^{m})+\mu_{k,m}(1)=\\
&=&-1+1=0=q_{k,m}(p^{\alpha }).
\end{eqnarray*}
The (\ref{21_29}) follows by M\"obius inversion formula.
\\
\end{proof}

\section{Asymptotic formulas}

\begin{theorem}
For $x\geq 3$ and $m>k\geq 2$ we have
\begin{equation}
\label{41}
\sum_{
\begin{array}{c}
r\leq x
\end{array}}
q^*_{k,m}(r)=
\frac{x \alpha_{k,m}\zeta(m)}{\zeta(k))}+
0\left(x^\frac{1}{k}\delta(x)\right).
\end{equation}
uniformly in $x$, $n$ and $k$, where
$$
\alpha_{k,m}=\prod_p\left(1-\frac{1}{p^{m-k+1}+p^{m-k+2}+\cdots +p^m}\right).
$$
$$
\delta (x)=\mbox{ exp }\{-A\; \log^{\frac{3}{5}}x\; 
(\log \log x)^{-\frac{1}{5}}\},
$$
for some absolute constant $A>0$.
\end{theorem}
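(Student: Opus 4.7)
The natural starting point is the convolution identity (\ref{21_28}) from Theorem~\ref{t3_2}, which realises $q^*_{k,m}$ as the Dirichlet convolution of $\mu_{k,m}$ with the indicator of $m$-th powers. Summing over $r\le x$ and swapping the order of summation yields
\begin{equation*}
\sum_{r\le x} q^*_{k,m}(r) \;=\; \sum_{d^m \ell \le x}\mu_{k,m}(\ell) \;=\; \sum_{d\le x^{1/m}} M_{k,m}\!\left(\frac{x}{d^m}\right),
\end{equation*}
where $M_{k,m}(y):=\sum_{\ell\le y}\mu_{k,m}(\ell)$. The specialisation $n=1$ of the estimate (\ref{31}) gives $M_{k,m}(y)=\alpha_{k,m}\zeta(k)^{-1}\,y+O\bigl(y^{1/k}\delta(y)\bigr)$, which reduces the problem to estimating two sums indexed by $d\le x^{1/m}$.

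For the main term I would apply Lemma~\ref{2.4.} with $s=m>1$ to get $\sum_{d\le x^{1/m}} d^{-m} = \zeta(m) + O\bigl(x^{-(m-1)/m}\bigr)$, so that the main contribution is
\begin{equation*}
\frac{\alpha_{k,m}\,\zeta(m)}{\zeta(k)}\,x \;+\; O\bigl(x^{1/m}\bigr).
\end{equation*}
Since $m>k$ and $\delta(x)$ decays more slowly than any positive power of $x$, the stray $O(x^{1/m})$ is absorbed into the target error $O\bigl(x^{1/k}\delta(x)\bigr)$.

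The only substantive task is to control the remainder
\begin{equation*}
x^{1/k}\sum_{d\le x^{1/m}} d^{-m/k}\,\delta(x/d^m),
\end{equation*}
in which $\sum d^{-m/k}$ converges because $m>k$. To extract a $\delta(x)$-type saving I would split the range at some $d_{0}=x^{\eta}$ with $\eta>0$ fixed and small. In the short range $d\le d_{0}$ one has $x/d^{m}\ge x^{1-m\eta}$, so $\delta(x/d^{m})\le\delta(x^{1-m\eta})$, which has the same $\exp\{-A'\log^{3/5}x\,(\log\log x)^{-1/5}\}$ shape as $\delta(x)$ after an admissible change of the constant $A$. In the complementary range $d_{0}<d\le x^{1/m}$ the tail of the convergent series $\sum d^{-m/k}$ is at most $O\bigl(d_{0}^{1-m/k}\bigr)$, a genuine power saving that is already $o\bigl(\delta(x)\bigr)$.

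The main obstacle is precisely this final step: the naive inequality $\delta(x/d^{m})\le\delta(x)$ fails for $d>1$, so one must split the range as above and absorb the change of constant into the definition of $\delta$. Once this bookkeeping is done, the combined error is $O\bigl(x^{1/k}\delta(x)\bigr)$, which together with the main-term computation above yields the claimed asymptotic.
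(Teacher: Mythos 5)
Your proposal is correct and follows essentially the same route as the paper: it starts from the convolution identity (\ref{21_28}), interchanges the order of summation, applies the estimate (\ref{31}) with $n=1$, and evaluates the main term via Lemma \ref{2.4.} with $s=m$. The only difference is cosmetic bookkeeping in the error sum: the paper handles $\delta(x/d^m)$ uniformly by using that $\delta(y)y^{\epsilon}$ is increasing and choosing $\epsilon$ with $\frac{m}{k}-\epsilon m>1$, whereas you split the range of $d$ at $x^{\eta}$ and use a power-saving tail; both devices are standard and lead to the same bound $O\bigl(x^{1/k}\delta(x)\bigr)$ after an admissible change of the constant $A$.
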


\begin{proof}
By (\ref{21_28}) and (\ref{31}) with $n=1$ we have
\[
\sum_{
\begin{array}{c}
r\leq x
\end{array}}
q^*_{k,m}(n)=
\sum_{
\begin{array}{l}
\delta d^m\leq x
\end{array}}
\mu_{k,m}(\delta )
=
\sum_{
\begin{array}{l}
d\leq x^{\frac{1}{m}}
\end{array}}
\sum_{
\begin{array}{l}
\delta \leq \frac{x}{d^m}
\end{array}}
\mu_{k,m}(\delta )=
\]
\[
=
\sum_{
\begin{array}{l}
d\leq x^{\frac{1}{m}}
\end{array}}
\left\{
\frac{\left(\frac{x}{d^m}\right)\alpha_{k,m}}{\zeta (k)}+
0\left(\frac{x^{\frac{1}{k}}}{d^{\frac{m}{k}}}
\delta \left(\frac{x}{d^m}\right)
\right)
\right\}=
\]
\[
=
\frac{x\alpha_{k,m}}{\zeta(k)}
\sum_{
\begin{array}{c}
d\leq x^{\frac{1}{m}}
\end{array}}
\frac{1}{d^{m}}
+
O\left(
\delta(x)x^{\epsilon }
x^{\frac{1}{k}-\epsilon }
\sum_{
\begin{array}{c}
d\leq x^{\frac{1}{m}}
\end{array}}
\frac{1}
{d^{\frac{m}{k}-\epsilon m}}
\right)
\]
Now we use \ref{2.4.} and the fact that $\delta(x)x^{\epsilon }$ is
increasing for all $\epsilon >0$, then we choose $\epsilon >0$ so
that $\frac{m}{k}-\epsilon m>1+\epsilon '$ we obtain (\ref{41}).
\end{proof}

\noindent
Applying the method used to prove Theorem 1, and making use of
Lemma \ref{2.2.} we get to

\begin{theorem}
If the Riemann hypothesis is true, then for $x\geq 3$ and $m>k\geq 2$
we have
\begin{equation}
\sum_{
\begin{array}{c}
r\leq x
\end{array}}
q^*_{k,m}(r)=
\frac{x \alpha_{k,m}\zeta(m)}{\zeta(k)}+
0\left(x^{\frac{2}{2k+1}}\omega(x)\right).
\end{equation}
uniformly in $x$, $n$ and $k$.
\end{theorem}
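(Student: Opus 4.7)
The plan is to mimic the proof of the unconditional theorem almost verbatim, substituting Lemma \ref{2.2.} for the use of (\ref{31}). Starting from the convolution identity (\ref{21_28}), I would write
\[
\sum_{r\leq x} q^*_{k,m}(r) = \sum_{d^m\delta\leq x} \mu_{k,m}(\delta) = \sum_{d\leq x^{1/m}} \sum_{\delta\leq x/d^m} \mu_{k,m}(\delta),
\]
and then apply Lemma \ref{2.2.} with $n=1$ to each inner sum. Note that $\theta(1)=1$, $\psi_k(1)=1$, and $\alpha_{k,m}(1)=1$, so the main term of the inner sum is $(x/d^m)\alpha_{k,m}/\zeta(k)$ and the error term is $O\bigl((x/d^m)^{2/(2k+1)}\omega(x/d^m)\bigr)$.

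Next I would separate the two contributions. The main-term contribution becomes
\[
\frac{x\,\alpha_{k,m}}{\zeta(k)} \sum_{d\leq x^{1/m}} \frac{1}{d^m},
\]
and applying Lemma \ref{2.4.} with $s=m>1$ extends the sum to infinity at the cost of a term of order $x^{1/m}\cdot x^{-(m-1)/m}=x^{-(m-2)/m}$ times $x$, which is $O(x^{1/m})$ and is absorbed by the announced error. This yields the principal term $x\,\alpha_{k,m}\zeta(m)/\zeta(k)$.

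For the error contribution, following the template of Theorem~1, I would introduce a small $\epsilon>0$ and use that $\omega(x)x^{\epsilon}$ is eventually monotone increasing so that $\omega(x/d^m)\leq\omega(x)\cdot(x/(x/d^m))^{\epsilon}=\omega(x)d^{m\epsilon}$; then the error aggregates to
\[
O\!\left(x^{2/(2k+1)}\omega(x)\sum_{d\leq x^{1/m}} d^{-\frac{2m}{2k+1}+m\epsilon}\right).
\]
Since $m>k\geq 2$ implies $2m/(2k+1)>1$, one can choose $\epsilon$ sufficiently small that the exponent remains $>1+\epsilon'$ for some $\epsilon'>0$, making the sum over $d$ convergent. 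This gives the stated error term $O(x^{2/(2k+1)}\omega(x))$.

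The only subtle step is this last choice of $\epsilon$: one must verify that $2m/(2k+1)>1$ strictly (which follows from $m\geq k+1$ and $k\geq 2$, giving $2m\geq 2k+2>2k+1$), so there is room to accommodate a positive $m\epsilon$ while keeping the Dirichlet series $\sum d^{-s}$ convergent with a uniform bound. Everything else is a routine transcription of the unconditional argument, with the exponent $1/k$ replaced by $2/(2k+1)$ and $\delta(x)$ replaced by $\omega(x)$.
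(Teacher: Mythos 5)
Your proposal is correct and is exactly what the paper intends: the paper's entire ``proof'' of this theorem is the remark that one applies the method of the unconditional theorem with Lemma \ref{2.2.} (at $n=1$) in place of the earlier asymptotic, which is precisely your transcription, including the check that $2m/(2k+1)>1$ leaves room for the $\epsilon$-shift and that the $O(x^{1/m})$ loss from truncating $\sum d^{-m}$ is absorbed by $O(x^{2/(2k+1)}\omega(x))$.
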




\bigskip
\rightline{\emph{Received: May 3, 2009}}     

\newpage

\ \

\thispagestyle{empty}

\end{document}